\declaretheorem[name=Theorem,style=plain,numbered=no]{tm*}
\newcommand{\eqdef}{=_{\mathrm{def}}}
\newcommand{\GCH}{\mathsf{GCH}}
\newcommand{\HOD}{\mathrm{HOD}}
\newcommand{\OD}{\mathrm{OD}}
\newcommand{\HC}{\mathrm{HC}}
\newcommand{\AD}{\mathsf{AD}}
\newcommand{\ZF}{\mathsf{ZF}}
\newcommand{\om}{\omega}
\newcommand{\RR}{\mathbb{R}}
\newcommand{\Ff}{\mathcal{F}}
\newcommand{\Tt}{\mathcal{T}}
\newcommand{\inter}{\cap}
\newcommand{\sub}{\subseteq}
\newcommand{\Coll}{\mathrm{Coll}}
\newcommand{\lh}{\mathrm{lh}}
\newcommand{\Uu}{\mathcal{U}}
\newcommand{\OR}{\mathrm{OR}}
\newcommand{\tu}{\textup}
\newcommand{\Ww}{\mathcal{W}}
\newcommand{\sats}{\models}
\newcommand{\rest}{\!\upharpoonright\!}
\newcommand{\nth}{{\textrm{th}}}
\newcommand{\PP}{\mathbb P}
\newcommand{\BB}{\mathbb B}
\newcommand{\sforces}[2]{\dststile{#2}{#1}}
\newcommand{\es}{\mathbb{E}}
\newcommand{\cross}{\times}
\begin{document}
\title{A long pseudo-comparison of premice in $L[x]$}
\author{Farmer Schlutzenberg\\ farmer.schlutzenberg@gmail.com}
\maketitle
\begin{abstract}We describe an obstacle to the analysis of $\HOD^{L[x]}$ as a core 
model: Assuming 
sufficient large cardinals, for a Turing cone of reals $x$ there are premice $M,N$ in 
$\HC^{L[x]}$ such that the pseudo-comparison of $L[M]$ with $L[N]$ succeeds, is computed in $L[x]$, 
and 
lasts through $\om_1^{L[x]}$ stages.
Moreover, we can take $M=M_1|(\delta^+)^{M_1}$ where $M_1$ is the minimal iterable proper class 
inner model with a Woodin cardinal, and $\delta$ is that Woodin. We can take $N$ such that $L[N]$ 
is $M_1$-like and short-tree-iterable.\end{abstract}
\section{Introduction}\label{sec:Introduction}

A central program in descriptive inner model theory is the analysis of $\HOD^W$, 
for 
transitive models $W$ satisfying $\ZF+\AD^+$; see 
\cite{outline}, \cite{hod_lr_below_theta}, \cite{hod_as_core_model}, \cite{sargsyan}.
For the models $W$ for which it has been successful, the analysis yields a wealth of information 
regarding $\HOD^W$ (including that it is fine structural and satisfies $\GCH$), and in turn about 
$W$.

Assume that there are $\om$ many Woodin cardinals with a measurable above. A primary example of the 
previous paragraph is the analysis of $\HOD^{L(\RR)}$. Work of Steel and Woodin showed that 
$\HOD^{L(\RR)}$ is an iterate of $M_\om$ augmented with a 
fragment of its iteration strategy (where $M_n$ is the minimal iterable proper class inner model 
with $n$ 
Woodin cardinals). The addition of the iteration strategy does not add reals, and so the 
$\OD^{L(\RR)}$ reals are just $\RR\inter M_\om$. The latter has an analogue for $L[x]$, which has 
been known for some time: for a cone of reals $x$, the 
$\OD^{L[x]}$ reals are just $\RR\inter M_1$.
Given this, and further analogies between $L(\RR)$ and $L[x]$ and their respective $\HOD$s, it is 
natural to ask whether there the full $\HOD^{L[x]}$ is an iterate of $M_1$, adjoined with a 
fragment 
of its iteration strategy.
Woodin has conjectured that this is so for a cone of reals $x$; for a precise statement see 
\cite[8.23]{lcfd}. Woodin has proved approximations to this conjecture. He analyzed 
$\HOD^{L[x,G]}$, 
for a cone of reals $x$, and $G\sub\Coll(\om,<\kappa)$ a generic filter over $L[x]$, where $\kappa$ 
is the least inaccessible of $L[x]$; see \cite[8.21]{lcfd} and \cite{hod_as_core_model}. However, 
the conjecture regarding $\HOD^{L[x]}$ is still open.

In this note, we describe a significant obstacle to the analysis of $\HOD^{L[x]}$.
\linebreak

Before proceeding, we give a brief summary of some relevant definitions and facts. We assume 
familiarity with the fundamentals of inner model theory; see \cite{outline}, \cite{fsit}.
One does not really need to know the analysis of $\HOD^{L[x,G]}$,
but familiarity does help in terms of motivation; the system $\Ff$ described below relates to 
that analysis. We do rely on some smaller facts from \cite[\S3]{hod_as_core_model}.
Let us give some terminology, and recall some facts from \cite{hod_as_core_model}.
We say that a premouse $N$ is \emph{pre-$M_1$-like}
iff $N$ is proper class, $1$-small, and has a (unique) Woodin cardinal, denoted $\delta^N$. 
(The notion \emph{$M_1$-like}
of \cite{hod_as_core_model} is stronger; it has some iterability built in.)
Let $P,Q$ be pre-$M_1$-like. Given a normal iteration tree $\Tt$ on $P$, $\Tt$ is 
\emph{maximal} iff 
$\lh(\Tt)$ is a limit and $L[M(\Tt)]$ has no Q-structure for $M(\Tt)$ (so $L[M(\Tt)]$ is 
pre-$M_1$-like 
with Woodin $\delta(\Tt)$). A premouse $R$ is 
a \emph{\tu{(}non-dropping\tu{)} pseudo-normal iterate} of $P$ iff there is a normal tree $\Tt$ on 
$P$ such 
that either $\Tt$ has successor length and $R=M^\Tt_\infty$, the last model of $\Tt$ (and 
$[0,\infty]_\Tt$ does not drop), 
or $\Tt$ is maximal and $R=L[M(\Tt)]$. A
\emph{pseudo-comparison} of $(P,Q)$ is a pair $(\Tt,\Uu)$ of normal iteration trees 
formed according to the usual rules of comparison, such that either $(\Tt,\Uu)$ is a 
successful comparison, or either $\Tt$ or $\Uu$ is maximal. A 
\emph{\tu{(}$z$-\tu{)}pseudo-genericity 
iteration} is defined similarly, formed according to the rules for genericity iterations 
making a real ($z$) generic for Woodin's extender algebra.
We say that $P$ is \emph{normally short-tree-iterable} iff for every 
normal, non-maximal iteration tree $\Tt$ 
on $P$ of limit length, there is a $\Tt$-cofinal wellfounded branch through $\Tt$,
and every putative normal tree $\Tt$ on $P$ of length $\alpha+2$ has wellfounded last model
(that is, we never encounter an illfounded model at a successor stage).
If $P|\delta^P\in\HC^{L[x]}$, then normal short-tree-iterability is absolute 
between $L[x]$ and $V$.
If $P,Q$ are normally short-tree-iterable then there is a 
pseudo-comparison $(\Tt,\Uu)$ of 
$(P,Q)$, and if $\Tt$ has a last model then $[0,\infty]_\Tt$ does not drop, and likewise for $\Uu$.
\linebreak

It has been suggested\footnote{For example, at the AIM Workshop on 
Descriptive inner model theory, June, 2014.} that one might analyze $\HOD^{L[x]}$ using an 
$\OD^{L[x]}$ directed system $\Ff$ such that:
\begin{enumerate}
 \item[--] the nodes of $\Ff$ are pairs $(N,s)$ such that $s\in\OR^{<\om}$ and $N$ is a normally 
short-tree iterable, pre-$M_1$-like premouse with $N|\delta^N\in\HC^{L[x]}$ and such that there is 
an 
$L[N]$-generic filter $G$ for $\Coll(\om,\delta^N)$ in $L[x]$,\footnote{The point of $G$ is 
that we can then use Neeman's genericity iterations, working inside $L[x]$. We \emph{cannot} use 
Woodin's, as closure under Woodin's would produce premice with Woodin cardinal $\om_1^{L[x]}$.}
\item[--] for $(P,t),(Q,u)\in\Ff$, we have $(P,t)\leq_\Ff(Q,u)$ iff $t\sub u$ and $Q$ is a 
pseudo-iterate of 
$P$, and
\item[--] $(M_1,\emptyset)\in\Ff$.
\end{enumerate}
There are also further conditions, regarding the sets 
$s$, strengthening the iterability requirements; these and other details regarding how the 
direct limit is formed from $\Ff$ are not relevant here.

The main difficulty in analyzing $\HOD^{L[x]}$ in this manner is 
in arranging that $\Ff$ be directed.
For this, it seems most obvious to try to arrange that $\Ff$ be closed 
under pseudo-comparison of pairs.

However, we show here that, given sufficient large cardinals, there is a cone of reals $x$ 
such that if $\Ff$ is as above, then $\Ff$ is \emph{not} closed under pseudo-comparison. The proof 
proceeds by finding a node $(N,\emptyset)\in\Ff$ such that, letting $(\Tt,\Uu)$ be the 
pseudo-comparison of $(M_1,N)$, then $\Tt,\Uu$ are in fact pseudo-genericity 
iterations of $M_1,N$ respectively, making reals $y,z$ generic, where 
$\om_1^{L[y]}=\om_1^{L[z]}=\om_1^{L[x]}$. Letting $W$ be the output of the 
pseudo-comparison, we have $W|\delta^W\in L[x]$, so $\om_1^{W[z]}=\om_1^{L[x]}$, which implies that 
$\delta^W=\om_1^{L[x]}$, so $(W,\emptyset)\notin\Ff$. We now proceed to the details.

\section{The comparison}\label{sec:The_Comparison}

For a formula $\varphi$ in the language of set theory (LST),
$\zeta\in\OR$, and $x\in\RR$, let $A_{\varphi,\zeta}^x$ be the set of all
$M\in\HC^{L[x]}$ such that $L[x]\sats\varphi(\zeta,M)$, and $L[M]$ is a normally 
short-tree-iterable pre-$M_1$-like premouse with $\delta^{L[M]}=\OR^M$ and $M=L[M]|\delta^M$.

Note that $\varphi$ does not use $x$ as a parameter. So by absoluteness of normal 
short-tree-iterability (between $L[x]$ and $V$, for elements of $\HC^{L[x]}$), 
$A^x_{\varphi,\zeta}$ is $\OD^{L[x]}$.
So
$A^x_{\varphi,\zeta}$ is a 
collection of premice like those involved in the system $\Ff$ 
(restricted to their Woodins).

\begin{tm*}\label{theorem}
Assume Turing determinacy and that $M_1^\#$ exists and is fully iterable. Then for a cone of 
reals $x$, for every formula $\varphi$ in the LST and every $\zeta\in\OR$, if 
$M_1|\delta^{M_1}\in 
A_{\varphi,\zeta}^x$ then there is 
$R\in A_{\varphi,\zeta}^x$ such that the pseudo-comparison of $M_1$ with $L[R]$ has 
length 
$\om_1^{L[x]}$.
\end{tm*}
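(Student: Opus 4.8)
The plan is to build, for a cone of $x$, the real $R$ together with control over the pseudo-comparison $(\Tt,\Uu)$ of $M_1$ with $L[R]$ by arranging that $\Tt$ and $\Uu$ are simultaneously genericity iterations making reals generic for the extender algebras of the two sides, and that these genericity iterations must run through all of $\om_1^{L[x]}$. The first step is to observe that $M_1|\delta^{M_1}\in A^x_{\varphi,\zeta}$ holds on a cone (as a hypothesis of the conclusion we may restrict to such $x$), and to recall, via the absoluteness of normal short-tree-iterability, that on a cone the $\OD^{L[x]}$ reals are exactly $\RR\inter M_1$, so that $M_1|\delta^{M_1}$ sits inside $\HC^{L[x]}$ and is recognized there. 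The key device is Turing determinacy: for each formula $\varphi$ and ordinal $\zeta$, we will play a game (computed using the iterability of $M_1^\#$ and of the relevant $N$) whose payoff forces the comparison to be long; winning on a cone gives the theorem. The technical heart is to find, inside $L[x]$, a normally short-tree-iterable pre-$M_1$-like $N$ (with $L[N]$ $M_1$-like) such that the pseudo-comparison of $M_1$ with $L[N]$ never terminates before stage $\om_1^{L[x]}$.

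To produce such an $N$, I would work recursively along $\om_1^{L[x]}$. At each countable stage $\alpha$ we have an initial segment $(\Tt\rest\alpha,\Uu\rest\alpha)$ of a would-be pseudo-comparison of $M_1$ with $L[N]$; because both sides are short-tree-iterable, linear rules of comparison apply and we may always continue as long as no disagreement has been resolved. The idea is to choose $N$ so that, at club-many stages, a new real $z_\alpha$ enters the picture — e.g. a real coding $\alpha$ — and is absorbed into the extender algebra of the current model, forcing a nontrivial extender to be used; this is exactly the mechanism of a pseudo-genericity iteration. Concretely, I would let $N$ be obtained from $M_1$ itself by a preliminary linear iteration (a ``lifting'' of $M_1$ past $\om_1^{L[x]}$-many measurables, or a Prikry-type construction as in \cite{hod_as_core_model}) so that $L[N]$ is $M_1$-like, $N|\delta^N\in\HC^{L[x]}$, and $L[N]$ carries enough extenders below its Woodin to make every real of $L[x]$ generic; then the pseudo-comparison of $M_1$ with $L[N]$ is forced (by the usual genericity-iteration bookkeeping, interleaved on both coordinates) to keep going until the common part model has absorbed all of $\RR^{L[x]}$, which happens exactly at $\om_1^{L[x]}$. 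One must check that $N$ lies in $A^x_{\varphi,\zeta}$: since $\varphi$ does not mention $x$ and short-tree-iterability is absolute, and since $M_1|\delta^{M_1}\in A^x_{\varphi,\zeta}$ by hypothesis, the genericity-iterate $N$ satisfies the same $\OD^{L[x]}$ definition $\varphi(\zeta,\cdot)$ provided $\varphi$ holds of it in $L[x]$ — so strictly we should replace ``there is $R\in A^x_{\varphi,\zeta}$'' by producing the canonical genericity iterate and arguing it satisfies $\varphi(\zeta,\cdot)$ because $\varphi$ is inherited under the relevant iterations, or else absorb this into the choice of the cone.

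The verification that the comparison has length exactly $\om_1^{L[x]}$ then splits into: (i) it does not terminate earlier — because at club-many stages below $\om_1^{L[x]}$ a fresh real of $L[x]$ forces a new extender to be applied on one side, so the trees are strictly growing and no final coiteration agreement is reached before stage $\om_1^{L[x]}$; and (ii) it does terminate at $\om_1^{L[x]}$, or at least is maximal there — because at stage $\om_1^{L[x]}$ the common part model $W$ has $W|\delta^W\in L[x]$ and has absorbed $\RR^{L[x]}$, so $\om_1^{W[z]}=\om_1^{L[x]}$ for the generic real $z$, forcing $\delta^W=\om_1^{L[x]}$, which is exactly the situation described in the introduction and makes $(W,\emptyset)\notin\Ff$; the short-tree-iterability of both sides guarantees the trees of length $<\om_1^{L[x]}$ all have wellfounded models and cofinal branches, so the construction genuinely reaches $\om_1^{L[x]}$.

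The main obstacle I expect is step (ii) together with the simultaneous bookkeeping in the construction of $N$: one must run a genericity iteration on the $M_1$-side and on the $L[N]$-side \emph{at the same time as} doing the comparison, ensuring that the disagreements the comparison wants to iterate away are compatible with (indeed, are driven by) the extenders the genericity iterations want to use, and that at every limit stage $\lambda<\om_1^{L[x]}$ the relevant branches exist and are wellfounded and the common part model is not yet Woodin-at-$\lambda$ from $L[x]$'s point of view. Making $L[N]$ genuinely $M_1$-like and short-tree-iterable (not merely pre-$M_1$-like) while retaining enough extenders to catch all reals of $L[x]$ is the delicate fine-structural point; here I would lean on the ``smaller facts from \cite[\S3]{hod_as_core_model}'' and on the full iterability of $M_1^\#$ to certify wellfoundedness of the branches chosen along the way, and on Turing determinacy to upgrade ``there is such an $x$'' to ``for a cone of $x$''.
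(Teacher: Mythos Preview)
Your proposal has a genuine gap at the point you yourself flag as delicate: ensuring $R\in A^x_{\varphi,\zeta}$. The formula $\varphi$ is completely arbitrary, so there is no reason whatsoever that it should be ``inherited under iterations'', and you cannot ``absorb this into the choice of cone'' either, since the theorem demands the conclusion for \emph{every} $\varphi$ and every $\zeta$ simultaneously. Your construction of $N$ as some direct iterate of $M_1$ (linear, Prikry-type, or otherwise) has no mechanism for making $L[x]\models\varphi(\zeta,N)$. Relatedly, if $N$ is literally an iterate of $M_1$ then the pseudo-comparison of $M_1$ with $L[N]$ just replays that iteration on the $M_1$-side, padded on the other; so either the iterate is short and the comparison is short, or you have iterated $M_1$ out to Woodin $\omega_1^{L[x]}$ and $N\notin\HC^{L[x]}$, hence $N\notin A^x_{\varphi,\zeta}$.

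The paper's argument is quite different and worth knowing. It proceeds by contradiction: fix a failing $\varphi$, pick $z$ in the bad cone with $z\geq_T M_1^\#$, run the $z$-genericity iteration of $M_1$ to get $Q$ with $Q[z]=L[z]$ and $\delta^Q=\omega_1^{L[z]}$, and then force over $Q$ with the finite-support $\omega$-fold product $\PP$ of the extender algebra $\BB$, arranging $z_0=z$. The resulting $x=\langle z_i\rangle$ has $\omega_1^{L[x]}=\delta^Q$ by $\delta^Q$-cc. Now the hypothesis ``$M_1|\delta^{M_1}\in A^x_{\varphi,\zeta}$ and its $z_0$-pseudo-genericity iteration is maximal of length $\delta^Q$ with common part $Q|\delta^Q$'' is forced by some $p\in G$ as a statement $\psi(\dot z_0)$ about the $0^{\text{th}}$ coordinate. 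By symmetry of the product one finds $q\leq p$ in $G$ with $q_m=q_0$ for some $m>0$, so $q$ forces $\psi(\dot z_m)$ as well; a short mutual-genericity argument (using that $M_1|\delta^{M_1}\notin Q$) then forces $\dot R_0\neq\dot R_m$. This is how $R=\dot R_m^G$ lands in $A^x_{\varphi,\zeta}$: not by any structural property of $R$, but because the forcing symmetry transports the truth of $\varphi(\zeta,\cdot)$ from coordinate $0$ to coordinate $m$. Finally, the claim that the pseudo-comparison of $(M_1,L[R])$ coincides with the padded pair of genericity iterations is proved by showing inductively that $E^{\Tt^*}_\beta\neq E^{\Uu^*}_\beta$ at every stage; if they were equal, the common extender would be forced (again by product symmetry) to lie in $Q$, contradicting that its index is a $Q$-cardinal. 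Your outline gestures at interleaving genericity iterations with comparison but lacks any device for this last step; the forcing symmetry is doing essential work there too.
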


\begin{proof} Suppose not. Then we may fix $\varphi$ such that for a cone of $x$, the theorem 
fails for $\varphi,x$. Fix $z$ in this cone with $z\geq_T M_1^\#$. Let $\Ww$ be the $z$-genericity 
iteration on $M_1$ (making $z$ generic for the extender algebra), and $Q=M^\Ww_\infty$.
By standard arguments (see \cite{hod_as_core_model}), $Q[z]=L[z]$,
\[ \lh(\Ww)=\om_1^{L[z]}+1=\delta^Q+1,\]
$Q|\delta^Q=M(\Ww\rest\delta^Q)$, and 
$\Tt\eqdef\Ww\rest\delta^Q$ is the $z$-pseudo-genericity iteration, and $\Tt\in L[z]$.

Let $\BB$ be the extender 
algebra of $Q$ and let $\PP$ be the finite support $\om$-fold product of $\BB$.
For $p\in\PP$ let $p_i$ be the $i^\nth$ component of $p$.
Let $G\sub\PP$ be $Q$-generic, with $z_0=z$ where 
$x\eqdef\left<z_i\right>_{i<\om}$ is the generic sequence of reals. Then
\[ Q[G]=Q[x]=L[x]\]
and 
$x>_Tz$. Let $\zeta\in\OR$ witness the failure of the theorem with respect to $\varphi,x$.
So $M_1|\delta^{M_1}\in A^x_{\varphi,\zeta}$.

By \cite[Lemma 3.4]{Farah} (essentially 
due to Hjorth), $\PP$ is $\delta^Q$-cc in $Q$, so
$\delta^Q\geq\om_1^{L[x]}$, but $\delta^Q=\om_1^{L[z]}$, so $\delta^Q=\om_1^{L[x]}$. So it suffices 
to see that there 
is some $R\in A^x_{\varphi,\zeta}$ such that the pseudo-comparison of $M_1$ with $L[R]$ 
has length $\delta^Q$.

For $e\in\om$ and $y\in\RR$ let $\Phi^y_e:\om\to\om$ be the partial function coded by the 
$e^\nth$ Turing program using the oracle $y$. Let $e\in\om$ be such that $\Phi^z_e$ is 
total and codes $M_1|\delta^{M_1}$. Let $\dot{x}$ be the $\PP$-name for the $\PP$-generic sequence 
of 
reals, and for $n<\om$ let $\dot{z}_n$ be the $\PP$-name for the $n^\nth$ real. Let $p\in 
G$ be such that $p\sforces{Q}{\PP}\psi(\dot{z}_0)$, where $\psi(v)$ asserts ``$\Phi^{v}_e$ is 
total and codes a premouse $R$ such that $R\in A^{\dot{x}}_{\check{\varphi},\check{\zeta}}$, and 
the $v$-pseudo-genericity iteration of $L[R]$ produces a maximal tree $\Uu$ of length 
$\check{\delta^Q}$
with $M(\Uu)=L[\check{\es}]|\check{\delta^Q}$''.
In the notation of this formula,
\[ p\sforces{Q}{\PP}\text{``}R\notin\check{V}\text{''}\text{, because }
p\sforces{Q}{\PP}\text{``}E^\Uu_0\notin M(\Uu)\text{''}.\]
By genericity, we may fix $q\in G$ such that $q\leq p$ and for some $m>0$, 
$q_m=q_0$. Note that 
$q\sforces{Q}{\PP}\psi(\dot{z}_m)$.

Let $\dot{R}_i$ be the $\PP$-name for the premouse coded by $\Phi^{\dot{z}_i}_e$ (or for 
$\emptyset$ if this does not code a premouse). Also let $\dot{z}'_0,\dot{z}'_1$ be the 
$\BB\cross\BB$-names for the two $\BB\cross\BB$-generic reals (in order), and let $\dot{R}'_i$ 
be the $\BB\cross\BB$-name for the premouse coded by $\Phi^{\dot{z}'_i}_e$.

We may fix $r\leq q$, 
$r\in G$, such that 
\begin{equation}\label{eqn:r_defn} r\sforces{Q}{\PP}\text{``}\dot{R}_0\neq\dot{R}_m\text{''}. 
\end{equation}
For otherwise there is $r\leq q$, $r\in G$, such that $r\sforces{Q}{\PP}$``$\dot{R}_0=\dot{R}_m$''.
But since
\[ M_1|\delta^{M_1}=\dot{R}_0^G\notin Q,\]
there are $s,t\in\BB$, $s,t\leq r_0$, such that 
\[ (s,t)\sforces{Q}{\BB\cross\BB}\text{``}\dot{R}'_0\neq\dot{R}'_1\text{''}. \]
Therefore there are $u,v\in\BB$, with $u\leq r_0$ and $v\leq r_m$, such that
\[ (u,v)\sforces{Q}{\BB\cross\BB}\text{``}\dot{R}'_0\neq\dot{R}'_1\text{''}. \]
Let $w\leq r$ be the condition with $w_i=r_i$ for $i\neq 0,m$,
and $w_0=u$ and $w_m=v$. Then
\[ w\sforces{Q}{\PP}\text{``}\dot{R}_0\neq\dot{R}_m\text{''},\]
a contradiction.

So letting $R=\dot{R}_m^G$, we have $R\neq M_1|\delta^{M_1}$ and $R\in A^x_{\varphi,\zeta}$
and $Q|\delta^Q=M(\Uu)$, where $\Uu$ is the $z_m^G$-pseudo-genericity iteration of 
$L[R]$, and $\lh(\Uu)=\delta^Q$. We defined $\Tt$ earlier.
Let $\Tt^*,\Uu^*$ be the padded trees equivalent to $\Tt,\Uu$, such that for each $\alpha$, either 
$E^{\Tt^*}_\alpha\neq\emptyset$ or $E^{\Uu^*}_\alpha\neq\emptyset$, and if 
$E^{\Tt^*}_\alpha\neq\emptyset\neq E^{\Uu^*}_\alpha$ then 
$\lh(E^{\Tt^*}_\alpha)=\lh(E^{\Uu^*}_\alpha)$.
Let $(\Tt',\Uu')$ be the pseudo-comparison of $(M_1,L[R])$.

We claim that 
$(\Tt',\Uu')=(\Tt^*,\Uu^*)$; this completes the proof. For this, we prove by induction on $\alpha$ 
that
\[ (\Tt',\Uu')\rest(\alpha+1)=(\Tt^*,\Uu^*)\rest(\alpha+1).\]
 This is immediate if $\alpha$ is a 
limit, so suppose it holds for $\alpha=\beta$; we prove it for $\alpha=\beta+1$. Let 
$\lambda=\lh(E^{\Tt^*}_\beta)$ or $\lambda=\lh(E^{\Uu^*}_\beta)$, whichever is defined. Because 
$M(\Tt^*)=Q|\delta^Q=M(\Uu^*)$,
the least disagreement between $M^{\Tt^*}_\beta$ and $M^{\Uu^*}_\beta$ has index $\geq\lambda$, so 
we just need to see that $E^{\Tt^*}_\beta\neq E^{\Uu^*}_\beta$.

So suppose that 
$E^{\Tt^*}_\beta=E^{\Uu^*}_\beta$. In particular, both are non-empty.
Then there is $s\in G$ such that $s\leq r$ (see line (\ref{eqn:r_defn})) and 
$s\sforces{Q}{\PP}\sigma$ where $\sigma$ asserts ``For 
$i=0,m$, let $\Tt_i$ be the $\dot{z}_i$-pseudo-genericity iteration of $L[\dot{R}_i]$. Then $\Tt_0$ 
and 
$\Tt_m$ use identical non-empty extenders $E$ of index $\check{\lambda}$.'' Because 
\[ s\sforces{Q}{\PP}\psi(\dot{z}_0)\ \&\ \psi(\dot{z}_m),\]
also $s\sforces{Q}{\PP}\sigma'$, where $\sigma'$ asserts ``Letting 
$E$ be as above, $E\sub L[\check{\es}]|\check{\lambda}$, but $E\notin\check{V}$''; here
$E^{\Tt^*}_\beta\notin Q$ because $\lambda$ is a 
cardinal of $Q$. 
But since $\Tt_i^G$ is computed in $Q[z_i^G]$ (for $i=0,m$)
we can argue as before (as in the proof of the existence of $r$ as in line (\ref{eqn:r_defn})) to 
reach a contradiction.
\end{proof}

A slightly simpler argument, using $\BB\cross\BB$ instead of $\PP$,
proves the weakening of the theorem given by dropping the parameter $\zeta$.

\bibliographystyle{plain}
\bibliography{a_long_comparison}
\end{document}